\newtheorem{thm}{Theorem}
\newtheorem{corollary}{Corollary}
\newcommand{\Z}{{\mathbbm Z}}
\newcommand{\R}{{\mathbbm R}}
\newcommand{\C}{{\mathbbm C}}
\newcommand{\N}{{\mathbbm N}}
\newcommand{\D}{{\mathbbm D}}
\newcommand{\X}{{\mathbbm S}}
\newcommand{\ECMV}{{\mathcal E}}
\newcommand{\E}{\mathbbm E}
\newcommand{\dd}{{\mathrm{d}}}
\newcommand{\ac}{{\mathrm{ac}}}
\newcommand{\CMV}{{\mathcal C}}
\newcommand{\set}[1]{\left\{#1\right\}}
\newcommand{\eqdef}{\overset{\mathrm{def}}=}
\definecolor{purple}{rgb}{.5,0,1}
\definecolor{orange}{rgb}{1,.5,0}
\begin{document}
\title[Purely AC spectrum using CMV density of states]{A condition for purely absolutely continuous spectrum for CMV operators using the\\ density of states}

\author{Jake Fillman}
\address{Virginia Tech,\\
Mathematics (MC0123), \\
225 Stanger Street, \\
Blacksburg, VA 24061, \\
USA}
\email{fillman@vt.edu}

\author{Darren C. Ong}
\address{Xiamen University Malaysia,\\
Jalan Sunsuria, Bandar Sunsuria,\\
43900 Sepang, Selangor Darul Ehsan,\\
Malaysia}
\email{darrenong@xmu.edu.my}

\maketitle

\begin{abstract}
We prove an averaging formula for the derivative of the absolutely continuous part of the density of states measure for an ergodic family of CMV matrices. As a consequence, we show that the spectral type of such a family is almost surely purely absolutely continuous if and only if the density of states is absolutely continuous and the Lyapunov exponent vanishes almost everywhere with respect to the same. Both of these results are CMV operator analogues of theorems obtained by Kotani for Schr\"odinger operators.
\end{abstract}

\section{Introduction}

\subsection{Background and Motivation}

CMV matrices are unitary operators on $\ell^2(\Z)$ or $\ell^2(\N)$ that have attracted substantial interest in recent years. These operators were introduced in \cite{CMV} as a bridge between spectral theory and the theory of orthogonal polynomials on the unit circle, an idea extensively explored in the monographs \cite{S1,S2}. In \cite{CGMV}, CMV operators were also proposed as a model for understanding one-dimensional quantum walks (quantum mechanical analogues of classical random walks). This connection has drawn a lot of attention recently from both mathematicians and physicists. See for instance \cite{ABJ2015, BGVW, CGMV2012RMP, CGVWW2016, DEFHV, DFO, DFV, DMY2, EKOS, Fillman-Ong, FOZ, GVWW, Joye-Merkli, Konno2010, Konno-Segawa2011, Konno-Segawa2014} in addition to the foregoing references. Moreover, as observed in \cite{DMY2}, one can relate the classical ferromagnetic Ising model in one dimension to a suitable CMV operator, a connection that allows one to rigorously prove characteristics such as the absence of phase transitions \cite{DFLY1}.

From the perspective of quantum walks, understanding the spectral type of CMV operators is particularly important, as the spectral type influences the scattering behavior (or lack thereof) for the associated quantum walk. A more precise relationship is furnished by a discrete-time variant of the RAGE Theorem (a name coined by Barry Simon to reflect the contributions of \cite{AmrGeo1973/74,Enss1978CMP,Ruelle}). Such a  discrete-time formulation of the RAGE Theorem suitable for quantum walks (with proofs) may be found in the Appendix of \cite{Fillman-Ong}. Roughly, point spectrum is associated with localization, that is, the walker remains close to its starting position; absolutely continuous spectrum is associated with scattering, that is, the walker flees to infinity; and, most exotically, singular continuous spectrum is associated with recurrent scattering, for which the walker flees to infinity in a time-averaged sense, but may potentially recur to its initial position along a subsequence of time scales. Thus, determining when a CMV operator possesses purely absolutely continuous spectrum is helpful in determining when scattering occurs in a quantum walk.

To this end, in this paper we adapt to the CMV operator some ideas originally used to solve related problems in the theory of Schr\"odinger operators in $L^2(\R)$, that is, second-order differential operators of the form
\[
H\psi
=
-\psi'' + V\psi.
\]

For such operators, \emph{Kotani theory} refers to a far-reaching family of results of Shinichi Kotani regarding the absolutely continuous spectrum of $H$ whenever $V$ is a metrically transitive potential (in more modern parlance:\ whenever $V$ is an ergodic, dynamically defined potential) \cite{Kotani1984,Kotani1985CMP,Kotani1987,Kotani1997}. Barry Simon generalized Kotani's work to the setting of Jacobi \cite{Simon1983CMP} and CMV \cite{S2} operators, with one exception. In the Notes for \cite[Section~10.11]{S2}, Simon points out that Kotani proves a stronger version of \cite[Theorem~10.11.2]{S2}, namely, that an ergodic family of Schr\"odinger operators exhibits purely absolutely continuous spectrum (almost surely with respect to the underlying ergodic measure) if and only if the density of states (DOS) measure is absolutely continuous and the Lyapunov exponent vanishes a.e.\ with respect to the DOS measure; see \cite[Corollary~4.8.2]{Kotani1997}. Simon notes that such a result also ought to hold true for ergodic families of CMV operators. The aim of this work is to prove exactly this theorem.

The key ingredient in Kotani's proof of this statement for Schr\"odinger operators is an averaging formula, found in \cite[Theorem~4.8]{Kotani1997}. Broadly speaking, Kotani's formula identifies the absolutely continuous part of the DOS with the average of the absolutely continuous parts of the associated spectral measures. In the present paper, we prove a suitable version of Kotani's averaging formula for CMV matrices, and then deduce his characterization of purely absolutely continuous spectrum as a consequence.

\subsection{Results}
An \emph{extended CMV matrix} is a pentadiagonal unitary operator on $\ell^2(\Z)$ with a repeating $2 \times 4$ block structure of the form
\begin{equation} \label{eq:stdcmvdef}
\ECMV
=
\ECMV_\alpha
=
\begin{bmatrix}
\ddots & \ddots & \ddots & \ddots &&& \\
& \overline{\alpha_2}\rho_1 
& -\overline{\alpha_2}\alpha_1 
& \overline{\alpha_3} \rho_2 
& \rho_3\rho_2 &\\
& \rho_2\rho_1 
& -\rho_2\alpha_1 
& -\overline{\alpha_3}\alpha_2 
& -\rho_3\alpha_2 & \\
&& \ddots & \ddots & \ddots & \ddots
\end{bmatrix},
\end{equation}
where $\alpha_n \in \D \eqdef \{ z \in \C : |z| < 1\}$ and $\rho_n = \sqrt{1-|\alpha_n|^2}$ for all $n \in \Z$. Setting $\alpha_{-1} = -1$, the operator decouples into two half-line operators. The operator on the right half-line takes the form
\begin{equation} \label{eq:halflinecmvdef}
\CMV
=
\begin{bmatrix}
 \overline{\alpha_0} & \overline{\alpha_1}\rho_0 & \rho_1\rho_0 && \\
\rho_0 & -\overline{\alpha_1}\alpha_0 & -\rho_1 \alpha_0 && \\
& \overline{\alpha_2}\rho_1 & -\overline{\alpha_2}\alpha_1 & \overline{\alpha_3} \rho_2 & \rho_3\rho_2 \\
& \rho_2\rho_1 & -\rho_2\alpha_1 & -\overline{\alpha_3}\alpha_2 & -\rho_3\alpha_2  \\
&& \ddots & \ddots &  \ddots & \ddots
\end{bmatrix},
\end{equation}
and is known as a \emph{standard} or \emph{half-line CMV matrix.}

Let us briefly recall how the density of states is defined. Given a CMV operator $\ECMV$ and $n \in \Z_+$, we denote its restriction to $[-n,n]$ with Dirichlet boundary conditions by $\ECMV_n = \chi_{[-n,n]}\ECMV$. Then, we define $\dd k_n$ to be the normalized eigenvalue counting measure; that is, $\dd k_n$ puts a Dirac atom of weight $m/(2n+1)$ at $\zeta$ whenever $\zeta$ is an eigenvalue of $\ECMV_n$ having multiplicity $m$. Whenever $\dd k_n$ enjoys a weak$^*$ limit as $n \to \infty$, we refer to said limit as the \emph{density of states measure} (henceforth: DOS) of $\ECMV$ and denote it by $\dd k$. This is also closely related to the so-called \emph{density of zeros measure} for orthogonal polynomials on the unit circle (OPUC); cf.\ \cite[Proposition~8.2.1 and Theorem~10.5.21]{S1}.

\emph{Ergodic} CMV operators (sometimes called \emph{stochastic} CMV operators) supply an important class of examples for which the DOS exists. Concretely, let $S:\Omega \to \Omega$ be an invertible transformation of a Borel space $\Omega$. Given a measurable function $f:\Omega \to \D$, we may define CMV operators indexed by $\Omega$ via $\ECMV_\omega = \ECMV_{\alpha(\omega)}$, where
\[
\alpha_n(\omega)
=
f(S^n\omega),
\quad
n \in \Z, \; \omega \in \Omega.
\]
Then, if $\mu$ is an $S$-ergodic measure on $\Omega$ and $|f| \leq C < 1$ $\mu$-almost everywhere, then the DOS of $\ECMV_\omega$ exists for $\mu$-a.e.\ $\omega \in \Omega$ by ergodicity. Moreover, as demonstrated in \cite[Theorem~10.5.21]{S2}, the DOS of an element of such an ergodic family is $\mu$-almost surely given by the $\mu$-average of spectral measures:
\[
\int g \, \dd k
=
\int_\Omega \langle \delta_0, g(\ECMV_\omega) \delta_0 \rangle \, \dd \mu(\omega).
\]

We will use $\E(\cdot)$ to denote integration against $\mu$, that is,
\[
\E(f)
=
\int_\Omega f(\omega) \, \dd \mu(\omega)
\]
for $f \in L^1(\Omega,\dd \mu)$. We denote the $\delta_0$ spectral measure of $\ECMV_\omega$ by $\dd \nu_\omega$, i.e.,
\begin{equation} \label{eq:nuomegadef}
\langle \delta_0, g(\ECMV_\omega) \delta_0 \rangle
=
\int_{\X^1} g(z) \, \dd \nu_\omega(z),
\end{equation}
where $\X^1 = \partial \D$ denotes the unit circle. Finally, we define the \emph{Lyapunov exponent} by
\[
\gamma(z)
=
\lim_{n \to \infty} \frac{1}{n} \E(\log\| A_z^n\|),
\]
where $A_z^n$ denotes the Szeg\H{o} cocycle at spectral parameter $z \in \C$, that is,
\[
A_z^n(\omega)
=
\begin{bmatrix}
z & -\overline{\alpha_{n-1}(\omega)} \\
-\alpha_{n-1}(\omega) z & 1
\end{bmatrix}
\times
\cdots
\times
\begin{bmatrix}
z & -\overline{\alpha_{0}(\omega)} \\
-\alpha_0(\omega) z & 1
\end{bmatrix},
\quad
n \geq 1, \; \omega \in \Omega.
\]
A crucial role is played by the set on which $\gamma$ vanishes:
\[
\mathcal Z
\eqdef
\set{z \in \C : \gamma(z) = 0}.
\]

We let $\nu_\omega^{(\ac)}(z)$ denote the density of the absolutely continuous part of $\dd\nu_\omega$, and $k^{(\ac)}(z)$ the density of the absolutely continuous part of $\dd k$. Our main result is that $k^{(\ac)}$ is precisely the $\mu$-average of $\nu_\omega^{(\ac)}$ almost everywhere on $\mathcal Z$.

\begin{thm} \label{thm:kotaniavg}
For Lebesgue-almost every $z\in\mathcal Z$,
\begin{equation}
k^{(\ac)}(z)
=
\E\!\left(\nu_\omega^{(\ac)}(z)\right).
\end{equation}
\end{thm}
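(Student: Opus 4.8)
The plan is to transcribe Kotani's argument from \cite{Kotani1997} into the CMV setting. For $z\in\D$ let $F_\omega(z)=\int_{\X^1}\frac{\zeta+z}{\zeta-z}\,\dd\nu_\omega(\zeta)$ be the Carath\'eodory function of the $\delta_0$-spectral measure $\dd\nu_\omega$ from \eqref{eq:nuomegadef}; taking $g(\zeta)=\frac{\zeta+z}{\zeta-z}$ in the identity $\int g\,\dd k=\E\big(\langle\delta_0,g(\ECMV_\omega)\delta_0\rangle\big)$ shows that $z\mapsto\E(F_\omega(z))$ is the Carath\'eodory function of $\dd k$. By the Fatou theorem for Poisson integrals, $\operatorname{Re}F_\omega(rz)$ converges as $r\uparrow1$ to a fixed multiple of $\nu_\omega^{(\ac)}(z)$ for Lebesgue-a.e.\ $z\in\X^1$ and every fixed $\omega$, and likewise $\operatorname{Re}\E(F_\omega)(rz)$ converges to the same fixed multiple of $k^{(\ac)}(z)$ for Lebesgue-a.e.\ $z$. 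I would also record at the outset the decoupling identity writing $F_\omega$ as an explicit M\"obius function of the two half-line Weyl--Carath\'eodory functions $M_\pm(\,\cdot\,,\omega)$ of the restrictions of $\ECMV_\omega$ to $\Z_{\ge0}$ and to $\Z_{<0}$ (obtained from the decoupling $\alpha_{-1}=-1$), together with the resulting boundary formula for $\operatorname{Re}F_\omega$ in terms of $\operatorname{Re}M_\pm$.

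One inequality is elementary and holds at Lebesgue-a.e.\ $z\in\X^1$, with no reference to $\mathcal Z$: since $\operatorname{Re}F_\omega(rz)\ge0$ and $\E(\operatorname{Re}F_\omega(rz))=\operatorname{Re}\E(F_\omega)(rz)$, Fatou's lemma along a sequence $r_j\uparrow1$ gives $\E\big(\nu_\omega^{(\ac)}(z)\big)\le k^{(\ac)}(z)$. The substance of Theorem~\ref{thm:kotaniavg} is the reverse inequality on $\mathcal Z$; this is where $\gamma(z)=0$ is indispensable, the point being that for general $z$ the singular parts of the measures $\dd\nu_\omega$ may average to an absolutely continuous measure, and one must show that on $\mathcal Z$ they cannot.

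For the reverse inequality the plan is to prove that for Lebesgue-a.e.\ $z\in\mathcal Z$ the family $\{\operatorname{Re}F_\omega(rz):r<1\}$ is uniformly integrable with respect to $\mu$; by Vitali's theorem this upgrades the Fatou bound to $\lim_{r\uparrow1}\E(\operatorname{Re}F_\omega(rz))=\E\big(\lim_{r\uparrow1}\operatorname{Re}F_\omega(rz)\big)$, which, after cancelling the common normalization constant, is precisely the assertion of Theorem~\ref{thm:kotaniavg}. The uniform integrability is to be extracted from $\gamma(z)=0$ in two steps. First, via the Thouless formula for CMV matrices --- which identifies $\gamma$ with a multiple of the logarithmic potential of $\dd k$ --- together with a Kotani-type identity expressing $\gamma(z)$ as a $\mu$-average of logarithms of quantities built from $M_\pm(\,\cdot\,,\omega)$ (such as $\operatorname{Re}M_\pm$ and the denominator in the M\"obius formula for $F_\omega$); on the set $\{\gamma=0\}$ this forces those logarithms to lie in $L^1(\dd\mu)$, so that $\operatorname{Re}M_\pm(\,\cdot\,,\omega)>0$ and is finite $\mu$-a.e. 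Second, using that $\mathcal Z$ coincides Lebesgue-a.e.\ with the essential support of the a.c.\ spectrum, so that on $\mathcal Z$ the operators $\ECMV_\omega$ are reflectionless: this rigidifies the boundary values $M_\pm(e^{i\theta},\omega)$ and allows one to bound $\operatorname{Re}F_\omega(rz)$ from above by an expression in $\operatorname{Re}M_\pm$ which, combined with the $L^1$ control of their logarithms, yields an $L\log L$- or $L^{1+\delta}$-type bound on $\operatorname{Re}F_\omega(rz)$ uniform in $r$ --- hence the desired uniform integrability.

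The hard part is exactly this extraction of uniform integrability: the CMV analogue of Kotani's logarithmic identity for $\gamma$, and the passage from $\gamma(z)=0$ to a bound genuinely stronger than the one-sided Fatou estimate. Put differently, everything reduces to ruling out that $\operatorname{Re}F_\omega(rz)$ concentrates, as $r\uparrow1$, on a set of $\omega$ of arbitrarily small $\mu$-measure; it is precisely here that the hypothesis $z\in\mathcal Z$ --- through the reflectionless property of $\ECMV_\omega$ on $\mathcal Z$ --- does the work, and I expect it to be the technical crux of the proof.
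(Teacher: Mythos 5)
Your framing of the easy direction is fine (Fatou's lemma, or equivalently Simon's Theorem~10.11.11, gives $\E(\nu_\omega^{(\ac)})\le k^{(\ac)}$ Lebesgue-a.e.), and you correctly identify the relevant ingredients for the hard direction (Thouless formula, Kotani's logarithmic formula for $\gamma$ in terms of the half-line data, reflectionlessness on $\mathcal Z$). But the proposal has a genuine gap: the entire content of the theorem is the reverse inequality, and you reduce it to ``uniform integrability of $\{\mathrm{Re}\,F_\omega(rz)\}_{r<1}$ in $\omega$'' which you then do not establish --- you only sketch that it should follow from an $L\log L$- or $L^{1+\delta}$-type bound extracted from $\gamma(z)=0$ and reflectionlessness, and you explicitly defer this as ``the technical crux.'' That deferral is the proof. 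Worse, the reduction itself buys nothing: since $\mathrm{Re}\,F_\omega(rz)\ge 0$ converges $\mu$-a.e.\ as $r\uparrow1$, uniform integrability is (by Vitali/Scheff\'e) essentially \emph{equivalent} to the identity $\lim_r\E(\mathrm{Re}\,F_\omega(rz))=\E(\lim_r\mathrm{Re}\,F_\omega(rz))$ you are trying to prove, so one cannot expect to obtain it from soft considerations; and no argument is given (nor is one apparent) that $\gamma=0$ plus reflectionlessness yields a uniform-in-$r$ $L^{1+\delta}$ or $L\log L$ bound on $\mathrm{Re}\,F_\omega(rz)$.

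For comparison, the paper never proves uniform integrability; it sandwiches both sides against one explicit quantity. Writing $F_\omega$ via the Schur functions, $F_\omega=\frac{1+zf_+f_-}{1-zf_+f_-}$, one shows: (i) on $\mathcal Z$, using the Thouless formula, Cauchy--Riemann, and $\gamma(e^{i\theta})=0$, that $k^{(\ac)}(e^{i\theta})=\frac{1}{2\pi}+\lim_{r\uparrow1}\frac{\gamma(re^{i\theta})}{\pi(1-r)}$; feeding in $\gamma(z)=\frac12\E\bigl(\log\frac{1-|zf_+|^2}{1-|f_+|^2}\bigr)$ and the elementary bound $\log(1+x)\le x$ gives $k^{(\ac)}\le\frac{1}{2\pi}\limsup_{r\uparrow1}\E\bigl(\frac{1+|f_+|^2}{1-|f_+|^2}\bigr)$; (ii) reflectionlessness ($f_+=\overline{zf_-}$ a.e.\ on $\mathcal Z$) identifies $\nu_\omega^{(\ac)}(e^{i\theta})$ with $\frac{1}{2\pi}\lim_{r\uparrow1}\frac{1+|f_+(re^{i\theta})|^2}{1-|f_+(re^{i\theta})|^2}$, and a Jensen's-inequality argument with the Poisson kernel restricted to $\mathcal Z^\circ$ shows $\E(\nu_\omega^{(\ac)})\ge$ the same $\limsup$. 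The quantitative steps (i) and (ii) --- the radial-derivative computation for $\gamma$, the $\log(1+x)\le x$ estimate, and the Poisson/Jensen harmonic-majorant argument --- are exactly what is absent from your proposal, so as written it does not constitute a proof of the hard direction.
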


As a consequence, we deduce Kotani's characterization of (almost-sure) pure a.c.\ spectrum.

\begin{corollary} \label{coro:kotani}
The spectral type of $\ECMV_\omega$ is purely absolutely continuous for $\mu$-almost every $\omega\in\Omega$ if and only if the density of states measure is absolutely continuous and the Lyapunov exponent vanishes almost everywhere with respect to the density of states measure.
\end{corollary}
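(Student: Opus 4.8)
The plan is to derive Corollary~\ref{coro:kotani} from Theorem~\ref{thm:kotaniavg} by a soft argument, so the work is mostly bookkeeping about a.c.\ parts. First I would fix the measurability/ergodicity backdrop: by ergodicity of $S$, the a.c., s.c., and pure point spectra of $\ECMV_\omega$ are $\mu$-almost surely equal to deterministic sets, say $\Sigma_{\ac}$, $\Sigma_{\sc}$, $\Sigma_{\pp}$, and likewise the essential closure of $\{z:\nu_\omega^{(\ac)}(z)>0\}$ is a.s.\ a fixed set. The assertion ``$\ECMV_\omega$ has purely a.c.\ spectrum $\mu$-a.s.''\ means $\Sigma_{\sc}=\Sigma_{\pp}=\emptyset$, i.e.\ $\dd\nu_\omega$ is purely a.c.\ for $\mu$-a.e.\ $\omega$ (since $\delta_0$ together with its $\ECMV_\omega$-translates is cyclic for an extended CMV matrix restricted to each half-line, one must be slightly careful here and instead use that $\ECMV_\omega$ has a.c.\ spectrum iff the relevant finite collection of spectral measures — e.g.\ the $\delta_0,\delta_1$ spectral measures generating the whole line — are a.c.; I would phrase the hypothesis directly as ``the spectral measures are a.s.\ purely a.c.'').

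Next comes the $(\Leftarrow)$ direction. Assume $\dd k$ is a.c.\ and $\gamma=0$ for $\dd k$-a.e.\ $z$; the latter says $\dd k$ is supported (up to null sets) on $\mathcal Z$, hence $\dd k = k^{(\ac)}(z)\,\dd z$ with $k^{(\ac)}$ supported on $\mathcal Z$. By Theorem~\ref{thm:kotaniavg}, $k^{(\ac)}(z)=\E(\nu_\omega^{(\ac)}(z))$ for a.e.\ $z\in\mathcal Z$, and since both sides vanish off $\mathcal Z$ in the relevant sense, $\int_{\X^1}\E(\nu_\omega^{(\ac)}(z))\,\dd z = \int\dd k = 1 = \E\!\left(\int_{\X^1}\dd\nu_\omega\right)$. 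Comparing, $\E\!\left(\int_{\X^1}(\dd\nu_\omega - \nu_\omega^{(\ac)}(z)\,\dd z)\right)=0$; the integrand is the total mass of the singular part $\dd\nu_\omega^{(\sing)}$, which is nonnegative, so $\dd\nu_\omega^{(\sing)}=0$ for $\mu$-a.e.\ $\omega$, i.e.\ $\dd\nu_\omega$ is purely a.c.\ almost surely. One then upgrades this from the $\delta_0$ spectral measure to full purely a.c.\ spectrum of $\ECMV_\omega$ by the standard observation that cyclicity of $\{\delta_0,\delta_1\}$ (or the analogous finite generating set) lets one control all spectral measures, combined with the translation covariance $\ECMV_{S\omega} = U\ECMV_\omega U^{-1}$ that identifies the $\delta_1$ (etc.) spectral measure of $\ECMV_\omega$ with the $\delta_0$ spectral measure of a shifted $\omega$.

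For $(\Rightarrow)$, assume $\dd\nu_\omega$ is purely a.c.\ for $\mu$-a.e.\ $\omega$. Then $\E\!\left(\int_{\X^1}\nu_\omega^{(\ac)}(z)\,\dd z\right)=1$, so by Fubini $\int_{\X^1}\E(\nu_\omega^{(\ac)}(z))\,\dd z = 1$; thus the a.c.\ measure $\E(\nu_\omega^{(\ac)}(z))\,\dd z$ has total mass $1$. Meanwhile $\dd k$ dominates (in the sense of comparing with) the measure $\E(\nu_\omega^{(\ac)}(z))\,\dd z$ off of any Lebesgue-null set? More precisely, Theorem~\ref{thm:kotaniavg} gives equality $k^{(\ac)}(z)=\E(\nu_\omega^{(\ac)}(z))$ only on $\mathcal Z$, so I need that $\E(\nu_\omega^{(\ac)}(z))$ is itself supported on $\mathcal Z$ up to a Lebesgue-null set: this follows because, off $\mathcal Z$, Kotani theory (positivity of the Lyapunov exponent a.e.\ Lebesgue on $\mathcal Z^c$) forces $\nu_\omega^{(\ac)}(z)=0$ for a.e.\ $\omega$ and a.e.\ such $z$ — this is the CMV analogue of the Ishii--Pastur--Kotani theorem, which I would either cite from \cite{S2} or extract from the machinery behind Theorem~\ref{thm:kotaniavg}. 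Granting that, $1 = \int_{\X^1}\E(\nu_\omega^{(\ac)}(z))\,\dd z = \int_{\mathcal Z}k^{(\ac)}(z)\,\dd z \le \int_{\X^1} k^{(\ac)}(z)\,\dd z = k(\X^1)^{(\ac)} \le 1$, forcing $\dd k$ to be purely a.c.\ and, moreover, $k^{(\ac)}=0$ a.e.\ off $\mathcal Z$, i.e.\ $\dd k(\mathcal Z^c)=0$, which says exactly that $\gamma$ vanishes $\dd k$-a.e.

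The main obstacle I anticipate is not any single inequality but the careful handling of the passage between ``$\delta_0$ spectral measure purely a.c.''\ and ``$\ECMV_\omega$ has purely a.c.\ spectrum,'' together with making sure the supports are compared correctly: I must pin down that $\E(\nu_\omega^{(\ac)}(\cdot))$ lives on $\mathcal Z$ (the CMV Ishii--Pastur--Kotani input) and that Theorem~\ref{thm:kotaniavg}'s ``a.e.\ $z\in\mathcal Z$'' clause loses nothing when integrated, since $\mathcal Z$ may have complicated (even zero Lebesgue) measure — but in that degenerate case both $\dd k$ and all $\dd\nu_\omega$ would have to be purely singular, making both sides of the corollary false, so the equivalence still holds. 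I would isolate this ``$\mathcal Z$ Lebesgue-null'' case separately and dispatch it by noting purely a.c.\ spectral measures cannot be supported on a Lebesgue-null set.
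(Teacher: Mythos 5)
Your proposal is correct, and its backward direction is essentially the paper's argument: the same mass--counting chain $1=\int_{\mathcal Z}k^{(\ac)}\,\dd\lambda=\int_{\mathcal Z}\E\bigl(\nu_\omega^{(\ac)}\bigr)\,\dd\lambda\le\E\bigl(\nu_\omega(\X^1)\bigr)=1$ via Theorem~\ref{thm:kotaniavg} and Fubini, forcing the singular parts to vanish almost surely, followed by the identical upgrade from the $\delta_0$ spectral measure to the full spectral type using cyclicity of $\{\delta_0,\delta_1\}$ and shift covariance. (Your side remark there that $\E\bigl(\nu_\omega^{(\ac)}\bigr)$ vanishes off $\mathcal Z$ is not yet known at that stage of that direction, but it is also not needed: the one-sided bound $\int_{\X^1}\E\bigl(\nu_\omega^{(\ac)}\bigr)\,\dd\lambda\ge\int_{\mathcal Z}k^{(\ac)}\,\dd\lambda=1$ already does the job.) The forward direction is where you genuinely diverge from the paper. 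The paper argues softly, without invoking Theorem~\ref{thm:kotaniavg} at all: $\dd k=\E(\dd\nu_\omega)$ is absolutely continuous because it is an average of a.c.\ measures (cf.\ \cite[(10.5.50)--(10.5.51)]{S2}), and $\gamma=0$ $\dd k$-a.e.\ follows from the Kotani-theoretic identification of the almost-sure a.c.\ spectrum with the essential closure of $\mathcal Z$ (\cite[Theorem~10.11.1]{S2}). You instead rerun the mass argument, feeding in Theorem~\ref{thm:kotaniavg} together with the Ishii--Pastur--Kotani vanishing $\nu_\omega^{(\ac)}(z)=0$ for Lebesgue-a.e.\ $z\notin\mathcal Z$ and $\mu$-a.e.\ $\omega$, and you extract both conclusions ($\dd k$ purely a.c.\ and $\dd k(\X^1\setminus\mathcal Z)=0$) in one stroke. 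Both routes are valid: yours requires the explicit Ishii--Pastur input (which is available in \cite{S2} and is in any case the measure-level fact underlying the essential-closure identity, so it is arguably the more airtight way to get $\gamma=0$ $\dd k$-a.e.), while the paper's is shorter and keeps the averaging formula out of that direction entirely. Your separate treatment of the case where $\mathcal Z$ is Lebesgue-null is harmless but unnecessary, since the inequality chains rule that case out automatically under either hypothesis.
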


We note that the hypotheses of this corollary are satisfied in the case where the Verblunsky coefficients form a periodic sequence. We refer the reader to \cite[Chapter 11]{S2} for further details.

Establishing connections between the DOS measure and the spectral measure is important, because this allows us to extract facts about the spectral measure of a CMV operator based on an eigenvalue analysis of the finite sub-matrices of the CMV matrix. An example where this technique was useful is in \cite{Fillman-Ong}, where we were able to prove results regarding the spreading behavior of a limit-periodic quantum walk model by using the DOS measure of the CMV matrix to characterize its spectral measure.

\subsection*{Acknowledgements} We are thankful to David Damanik for helpful conversations. J.~F.\ was supported in part by an AMS--Simons Travel Grant 2016--2018.

\section{Proof of Main Theorem}
We will follow Kotani's original argument as presented in the proof  of \cite[Theorem~5]{Damanik.KotaniSurvey}. The broad strokes of the argument are similar; the main differences arise from the more complicated nature of Weyl--Titchmarsh theory for CMV matrices. Concretely, there are several different analytic functions that play the role of the Weyl--Titchmarsh $m$-function, which complicates some of the algebraic gymnastics. For more on the various analogs of the $m$-function for CMV operators, see \cite{simanalogs}.

We define first the Green's  function 
\begin{equation} \label{eq:greenfctdef}
G_\omega(z)
=
G_\omega(0,0;z)
\eqdef
\left< \delta_0, (\mathcal E_\omega-z)^{-1}\delta_0\right>,
\quad
z \in \C \setminus \sigma(\ECMV_\omega).
\end{equation}
In view of the definition \eqref{eq:nuomegadef}, one immediately has
\[
\int_{\X^1} \frac{\dd \nu_\omega(\tau)}{\tau-z}
=
G_\omega(z).
\]
The Carath\'eodory function of $\nu_\omega$ will be defined by
\begin{equation} \label{eq:carafctdef}
F_\omega(z)
=
\left< \delta_0, (\mathcal E_\omega+z)(\mathcal E_\omega-z)^{-1}\delta_0\right>,
\quad
z \in \C \setminus \sigma(\ECMV_\omega).
\end{equation}
This defines an analytic function from $\D$ to the right half plane, whose limiting behavior on the unit circle is connected to the behavior of $\dd \nu_\omega$. Critically, one can recover the absolutely continuous part of $\dd \nu_\omega$ from the boundary values of $\mathrm{Re}\, F_\omega$. That is, by \cite[(1.3.31)]{S2} we have
\begin{equation}\label{eq:nuacfromcara}
\nu_\omega^{(\ac)}(e^{i\theta})
=
\lim_{r\uparrow 1} \frac{1}{2\pi} \mathrm{Re}\, F_\omega(re^{i\theta})
\end{equation}
for Lebesgue almost every $\theta \in [0,2\pi)$.
 
Using \eqref{eq:nuomegadef}, \eqref{eq:greenfctdef}, and \eqref{eq:carafctdef} we may relate the Carath\'eodory and Green functions via
\begin{equation} \label{eq:caragreenrel}
F_\omega(z)
=
\int_{\X^1} \frac{\tau + z}{\tau - z} \, \dd \nu_\omega(\tau)
=
\int_{\X^1} \left(1 + \frac{2z}{\tau-z} \right) \, \dd \nu_\omega(\tau)
=
1 + 2z G_\omega(z).
\end{equation}

Let us define $\Gamma$ as in \cite[(10.11.18)]{S2}, i.e.,
\[
\Gamma(z)
=
\int_{\X^1} \log\left(\frac{1-z \overline{\tau}}{\rho_\infty}\right) \, \dd k(\tau),
\text{ where }
\rho_\infty
=
\exp\left(\frac{1}{2} \E \! \left(\log(1-|f(\omega)|^2) \right)\right).
\]
By the ergodic theorem, one has
\[
\lim_{n \to \infty} \left(\prod_{j=0}^{n-1} (1-\vert \alpha_j(\omega)\vert^2)\right)^{1/2n}
=
\rho_\infty
\text{ for } \mu\text{-almost every }\omega \in \Omega.
\]
The Lyapunov exponent satisfies the Thouless formula. That is, we have
\begin{equation} \label{eq:thouless}
\gamma(z) = \mathrm{Re} \, \Gamma(z)
\end{equation} for all $z$.\footnote{There is a small subtlety here: we are using $\gamma$ to denote the averaged Lyapunov exponent, which always exists and obeys the Thouless Formula for all $z$. The behavior of the non-averaged Lyapunov exponent can be somewhat delicate on $\X^1$; see the discussion in the Remark following \cite[Theorem~10.5.26]{S2} for additional details.}

We define also the Carath\'eodory function corresponding to the DOS by
\[
K(z)
=
\int_{\X^1} \frac{\tau+z}{\tau-z} \, \dd k(\tau),
\quad
z \in \D.
\]
Just as with $F_\omega$, we may use the boundary values of $K$ to recover the absolutely continuous part of $\dd k$ as in \eqref{eq:nuacfromcara}. One has 
\begin{equation}\label{eq:kacfromcara}
k^{(\ac)}(e^{i\theta})
=
\frac{1}{2\pi}\lim_{r\uparrow 1} \mathrm{Re}\, K(re^{i\theta})
\end{equation}
for a.e.\ $\theta \in [0,2\pi)$. By the definitions of the functions $K$ and $\Gamma$, it is straightforward to calculate that they are connected via
\begin{equation}\label{e:gammaK}
K(z)
=
1-2z\dfrac{\dd \Gamma}{\dd z}(z),
\end{equation} 
which may be viewed as an analogue of \eqref{eq:caragreenrel}.

\begin{proof}[Proof of Theorem~\ref{thm:kotaniavg}]
The ``$\geq$" direction follows immediately from \cite[Theorem~10.11.11]{S2}, and the fact that the average of absolutely continuous measures is absolutely continuous.

For the other direction, we use (10.11.28) of \cite{S2}, which states that
\begin{equation}\label{e:averageG}
\E(G_\omega(z))
=
\int_{\X^1} \frac{\dd k(\tau)}{\tau-z}.
\end{equation}

For convenience, let us introduce $\mathcal Z^\circ \subseteq [0,2\pi)$ by insisting that $\theta \in \mathcal Z^\circ$ if and only if $e^{i\theta} \in \mathcal Z$. By \eqref{eq:thouless}, \eqref{e:gammaK}, and standard facts about Carath\'eodory functions (e.g.\ \cite[Section~1.3]{S1}), one has
\begin{equation}\label{e:limitingLyapunov}
\lim_{r\uparrow 1}\frac{\partial \gamma}{\partial r}(re^{i\theta})
=
\lim_{r\uparrow 1} \frac{\gamma(e^{i\theta}) - \gamma(re^{i\theta})}{1-r}
=
-\lim_{r\uparrow 1}\frac{\gamma(re^{i\theta})}{1-r}
\end{equation}
for Lebesgue almost every $\theta \in \mathcal Z^\circ$. Since it is critical to our proof, let us point out that there is a sign error in \cite[(10.11.22)]{S2}, which is why our Equation~\eqref{e:limitingLyapunov} does not match Simon's identity for the boundary values of $\partial \gamma/\partial r$ on $\mathcal Z$.

We can then write
\begin{align}
k^{(\ac)}(e^{i \theta})
& =
\lim_{r\uparrow 1} \frac{1}{2\pi}\mathrm{Re} \, K(re^{i \theta}) \text{ by \eqref{eq:kacfromcara} }\nonumber\\
& =
\lim_{r\uparrow 1} \frac{1}{2\pi}\mathrm{Re}\left( 1-2re^{i \theta}\dfrac{\dd \Gamma}{\dd z}(re^{i \theta})\right) \text{ by \eqref{e:gammaK} }\nonumber\\
& =
\frac{1}{2\pi}-
\lim_{r\uparrow 1} \frac{1}{\pi} \frac{\partial \gamma}{\partial r}(r e^{i \theta})
 \text{ by \eqref{eq:thouless} and Cauchy--Riemann} \nonumber\\
& =
\frac{1}{2\pi}+
\lim_{r\uparrow 1}\frac{1}{\pi}\left(\frac{\gamma(re^{i \theta})}{1-r} \right) \text{ by \eqref{e:limitingLyapunov}}
\label{e:bigcalc}
\end{align}
for Lebesgue a.e.\ $\theta \in \mathcal Z^\circ$.

Let $f_+(z) =f_+(z,\omega)$ and $f_-(z) =f_-(z,\omega)$ be the Schur functions corresponding to the half-line CMV matrices with Verblunsky coefficient sequences given by $\alpha_0(\omega), \alpha_1(\omega),\ldots$ and $-\overline{\alpha_{-1}(\omega)}, -\overline{\alpha_{-2}(\omega)}, \ldots$ respectively. A bit more precisely, these coefficient sequences determine half-line operators $\CMV_\pm(\omega)$ as in \eqref{eq:halflinecmvdef} with cyclic vector $\delta_0$ and associated spectral measures $\dd\mu_{\pm,\omega}$. The Schur functions are analytic functions from $\D$ to itself defined in terms of the half-line Carath\'eodory functions $K_{\pm,\omega}$ by:
\[
f_\pm(z,\omega)
=
\frac{1}{z} \frac{K_{\pm,\omega}(z)-1}{K_{\pm,\omega}(z)+1},
\quad
K_{\pm,\omega}(z)
=
\int_{\X^1} \frac{\tau+z}{\tau-z} \, \dd\mu_{\pm,\omega}(\tau).
\] 
For further discussion on the role of the Schur function in the spectral theory of CMV matrices, we refer the interested reader to \cite[Section~1.3]{S1}.

These half-line Schur functions are connected to the Lyapunov exponent via \cite[Proposition 10.11.7]{S2}, which gives us:
\begin{equation}\label{e:gamma->f}
\gamma(z)
=
\frac{1}{2}\E\left( \log\left( \frac{1-\vert zf_+\vert^2}{1-\vert f_+ \vert^2}  \right)\right).
\end{equation}

Additionally, by \cite[Proposition 10.11.12]{S2}, the Schur functions are connected to the Green's function in the following way:
\begin{equation}\label{e:Green->Schur}
G_{\omega}(z)
=
\frac{f_+(z,\omega)f_-(z,\omega)}{1-zf_+(z,\omega)f_-(z,\omega)}.
\end{equation}
In view of \eqref{eq:caragreenrel}, this implies
\begin{equation}\label{e:Cara->Schur}
F_{\omega}(z)
=
\frac{1+zf_+(z,\omega)f_-(z,\omega)}{1-zf_+(z,\omega)f_-(z,\omega)}.
\end{equation}

\begin{comment}
By the calculation in \eqref{e:bigcalc} and \cite[Lemma 10.11.13]{S2} we have  that
\begin{equation}\label{e:Re=0}
\lim_{r\uparrow 1} \E\left(
\frac{1}{1-\vert f_+(re^{i\phi})\vert^2}+\frac{1}{1-\vert f_-(re^{i\phi})\vert^2}+2\mathrm{Re}\left[
\frac{1}{re^{i\phi}f_+(re^{i\phi})f_-(re^{i\phi})-1}
\right]
\right)=0
\end{equation}

Note that this implies, almost surely, that

\begin{equation}\label{e:Im=0}
\lim_{r\uparrow 1}\mathrm{Im}(
re^{i\phi}f_+(re^{i\phi})f_-(re^{i\phi}))=0.
\end{equation}
\end{comment}

Moreover, by \cite[Theorem 10.11.16]{S2}, $\ECMV_\omega$ is reflectionless on $\mathcal Z$ for $\mu$ a.e.\ $\omega \in \Omega$. That is, for $\mu$ a.e.\ $\omega$, we have
\begin{equation}\label{e:f_+=f_-}
f_+(z_0,\omega)
=
\overline{z_0f_-(z_0,\omega)}
\end{equation}
for Lebesgue a.e.\ $z_0\in \mathcal Z$.  Consequently, for $\mu$ a.e.\ $\omega$, the following calculation holds for Lebesgue a.e.\ $\theta \in \mathcal Z^\circ$:
\begin{align}
\nu_\omega^{(\ac)}(e^{i \theta})
& =
\lim_{r\uparrow 1}\frac{1}{2\pi}\mathrm{Re} \, F_\omega(re^{i \theta}) \text{ by \eqref{eq:nuacfromcara} } \nonumber\\
& =
\lim_{r\uparrow 1}\frac{1}{2\pi}\mathrm{Re}\left(
\frac{1+re^{i \theta}f_+(re^{i \theta})f_-(re^{i \theta})}{1-re^{i \theta} f_+(re^{i \theta})f_-(re^{i \theta})}\right)\text{ by \eqref{e:Cara->Schur}}\nonumber\\
& =
\lim_{r\uparrow 1}\frac{1}{2\pi}\left(
\frac{1+\vert f_+(re^{i \theta})\vert^2}{1-\vert f_+(re^{i \theta})\vert^2}\right)\text{ by \eqref{e:f_+=f_-}}\label{e:nu_omega^ac}.
\end{align}

Let $P_R$ be the Poisson kernel  for the unit disk, that is, for $R\in[0,1)$ and $\tau \in \X^1$, put
\[
P_R(\tau)
=
\mathrm{Re}\left(
\frac{1+R\tau}{1-R\tau}
\right).
\]
Throughout the rest of the argument, we will freely use some basic facts about Poisson integrals and boundary values of harmonic functions. The reader is invited to consult \cite[Chapter~11]{Rudin} for further information. Let us also define
\[
C_R(\phi)
=
\int _{\mathcal Z^\circ} P_R(e^{i(\phi-\phi')}) \, \dd\phi',
\]
and
\[
\widetilde P_R(\phi,\phi')
=
P_R(e^{i(\phi-\phi')})C_R(\phi)^{-1}.
\]

By Jensen's inequality, we have 
\begin{align*}
\int_{0}^{2\pi}\nu_\omega^{(\ac)}(e^{i\phi'})P_R(e^{i(\phi-\phi')}) \,\dd\phi'
& \geq 
\int_{\mathcal Z^\circ}\nu_\omega^{(\ac)}(e^{i\phi'})P_R(e^{i(\phi-\phi')}) \,\dd\phi' \\
& =
C_R(\phi)\int_{\mathcal Z^\circ} \nu_\omega^{(\ac)}(e^{i\phi'}) \widetilde P_R(\phi,\phi') \,\dd\phi' \\ 
& \geq
C_R(\phi) \left(\int_{\mathcal Z^\circ} \left(\nu_\omega^{(\ac)}(e^{i\phi'}) \right)^{-1} \widetilde P_R(\phi,\phi') \,\dd\phi'\right)^{-1} \\ 
& \geq
C_R(\phi)^2\left(\int_{0}^{2\pi} \left( \nu_\omega^{(\ac)}(e^{i\phi'})\right)^{-1}  P_R(e^{i(\phi-\phi')}) \,\dd\phi'\right)^{-1}
\end{align*}
for every $\omega$ and every $\phi$. Then, by \eqref{e:nu_omega^ac}, we get
\[
\int_{0}^{2\pi}\nu_\omega^{(\ac)}(e^{i\phi'})P_R(e^{i(\phi-\phi')}) \,\dd\phi'
\geq
\frac{C_R(\phi)^2}{2\pi} \left(
\frac{1+\vert f_+(Re^{i\phi})\vert^2}{1-\vert f_+(Re^{i\phi})\vert^2}\right)
\]
for $\mu$ a.e.\ $\omega$ and Lebesgue a.e.\ $\phi \in \mathcal Z^\circ$. Consequently, for a.e.\ $\phi \in \mathcal Z^\circ$, we get
\[
\int_{0}^{2\pi} \E\left(\nu_\omega^{(\ac)}(e^{i\phi'})\right)P_R(\phi-\phi') \,\dd \phi'
\geq 
\frac{C_R(\phi)^2}{2 \pi} 
\E \left(
\frac{1+\vert f_+(Re^{i\phi'})\vert^2}{1-\vert f_+(Re^{i\phi'})\vert^2}\right).
\]
Sending $R \uparrow 1$, we note that $|C_R(\phi)| \leq 1$ for all $\phi$ and $C_R(\phi)\to 1$ for Lebesgue a.e.\ $\phi \in \mathcal Z^\circ$ by standard properties of Poisson integrals. Consequently, after re-labeling $R$ as $r$, we get
\begin{equation}\label{e:exp(nu_omega^ac)}
\E\!\left(\nu_\omega ^{(\ac)}(e^{i\phi}) \right)
\geq
\frac{1}{2\pi}\limsup_{r\uparrow 1} 
\E\left( \frac{1+\vert f_+(re^{i\phi})\vert^2}{1-\vert f_+(re^{i\phi}) \vert^2} \right)
\end{equation}
for Lebesgue a.e.\ $\phi \in \mathcal Z^\circ$.

At last, we put everything together. By \eqref{e:bigcalc} and \eqref{e:gamma->f}, we have
\[
k^{(\ac)}(e^{i\phi})
=
\frac{1}{2\pi} +
\lim_{r\uparrow 1}\frac{1}{2\pi(1-r)} \E\left(\log\left( 1+\frac{\vert f_+(re^{i\phi})\vert^2 (1-r^2)}{1-\vert f_+(re^{i\phi})\vert^2}\right)\right)
\] 
for a.e.\ $\phi \in \mathcal Z^\circ$. Since $\log(1+x) \leq x$ for $x \geq 0$, we get
\begin{align*}
k^{(\ac)}(e^{i\phi})
& \leq
\frac{1}{2\pi} + \frac{1}{2\pi}
\limsup_{r\uparrow 1}\E 
\left( \frac{\vert f_+(re^{i\phi})\vert^2(1+r)}{1-\vert f_+(re^{i\phi})\vert^2} \right) \\
& =
\frac{1}{2\pi} + \frac{1}{\pi} \limsup_{r\uparrow 1}
\E \left( \frac{\vert f_+(re^{i\phi})\vert^2}{1-\vert f_+(re^{i\phi})\vert^2}\right)  \\
& =
\frac{1}{2\pi}
\limsup_{r\uparrow 1} \E \left( \frac{1+\vert f_+(re^{i\phi})\vert^2}{1-\vert f_+(re^{i\phi})\vert^2}\right)
\end{align*}
for a.e.\ $\phi \in \mathcal Z^\circ$. Combining this with \eqref{e:exp(nu_omega^ac)} we get
\[
k^{(\ac)}(e^{i\phi})
\leq
\frac{1}{2\pi}\limsup_{r\uparrow 1}\E \left( \frac{1+\vert f_+(re^{i\phi})\vert^2}{1-\vert f_+(re^{i\phi})\vert^2}\right)
\leq
\E\!\left(\nu_\omega ^{(\ac)}(e^{i\phi}) \right)
\]
for a.e.\ $\phi \in \mathcal Z^\circ$, which completes the proof.

\end{proof}

\begin{proof}[Proof of Corollary~\ref{coro:kotani}]
Suppose first that the spectrum of $\ECMV_\omega$ is purely absolutely continuous for $\mu$-almost every $\omega \in \Omega$. Then, as it is the average of absolutely continuous measures, it follows that $\dd k$ is absolutely continuous (cf.\ \cite[(10.5.50)--(10.5.51)]{S2}). Moreover, since the almost-sure absolutely continuous spectrum is given by the essential closure of the set upon which the Lyapunov exponent vanishes (e.g.\ by \cite[Theorem~10.11.1]{S2}), it follows that the Lyapunov exponent vanishes Lebesgue-a.e.\ (hence $\dd k$-a.e.) on the spectrum.

Conversely, if $\dd k$ is purely a.c.\ and $\gamma$ vanishes $\dd k$-a.e. on $\X^1$, then
\begin{align*}
1 
& =
\int_\Sigma \dd k(z) \\
& =
\int_{\mathcal Z} \dd k(z) \\
& =
\int_{\mathcal Z} k^{(\ac)}(z) \, \dd \lambda(z) \\
& =
\int_{\mathcal Z} \E(\nu_\omega^{(\ac)}(z)) \, \dd \lambda(z) \\
& =
\E \left( \int_{\mathcal Z} \nu_\omega^{(\ac)}(z) \, \dd \lambda(z) \right) \\
& \leq
\E \left( \int_{\Sigma} \nu_\omega^{(\ac)}(z) \, \dd \lambda(z) \right) \\
& \leq 
1,
\end{align*}
by Theorem~\ref{thm:kotaniavg} and Fubini's Theorem. In the calculation above, $\dd\lambda$ denotes normalized 1D Lebesgue measure on $\X^1$. Since the chain of inequalities begins and ends with one, all inequalities are equalities, so the absolutely continuous part of $\nu_\omega$ has full weight for $\mu$-a.e.\ $\omega$. The transformation $S$ preserves $\mu$, so one also has $\nu_{S\omega}^{(\ac)}(\Sigma)= 1$ for $\mu$-a.e.\ $\omega$. Since $\nu_{S\omega}$ is the spectral measure of $\ECMV_\omega$ corresponding to the vector $\delta_1$ and the pair $\{\delta_0,\delta_1\}$ is cyclic for $\ECMV_\omega$, taking the intersection of those two full-measure sets yields a full-measure set for which $\ECMV_\omega$ has purely a.c.\ spectrum.
\end{proof}


\begin{thebibliography}{00}
\bibitem{AmrGeo1973/74} W.\ O.\ Amrein, V.\ Georgescu, On the characterization of bound states and scattering states in quantum mechanics, \textit{Helv.\ Phys.\ Acta} \textbf{46} (1973/74), 635--658.

\bibitem{ABJ2015} J.\ Asch, O.\ Bourget, A.\ Joye, Spectral stability of unitary network models, \textit{Rev.\ Math.\ Phys.}\ \textbf{27} (2015), no.\ 7, 1530004, 22 pp. 

\bibitem{BGVW} J.\ Bourgain, A.\ Gr\"unbaum, L.\ Vel\'azquez, J.\ Wilkening, Quantum recurrence of a subspace and operator-valued Schur functions, \textit{Commun.\ Math.\ Phys.}\ \textbf{329} (2014), 1031--1067.

\bibitem{CGMV} M.-J.\ Cantero, A.\ Gr\"unbaum, L.\ Moral, L.\ Vel\'azquez, Matrix-valued Szeg\H{o} polynomials and quantum random walks, \textit{Comm.\ Pure Appl.\ Math.}\ \textbf{63} (2010), 464--507.

\bibitem{CGMV2012RMP} M.-J.\ Cantero, A.\ Gr\"unbaum, L.\ Moral, L.\ Vel\'azquez, One-dimensional quantum walks with one defect, \textit{Rev.\ Math.\ Phys.}\ \textbf{24} (2012), no. 2, 1250002, 52 pp. 

\bibitem{CMV} M.\ J.\ Cantero, L.\ Moral and L.\ Velazquez, Five-diagonal matrices and zeros of orthogonal polynomials on the unit circle. \textit{Linear Algebra Appl.} \textbf{362} (2003), 29--56.

\bibitem{CGVWW2016} C.\ Cedzich, F.\ A.\ Gr\"unbaum, L.\ Vel\'azquez, A.\ H.\ Werner, A. H., R.\ F.\ Werner, A quantum dynamical approach to matrix Khrushchev's formulas, \textit{Comm.\ Pure Appl.\ Math.}\ \textbf{69} (2016), no. 5, 909--957. 

\bibitem{Damanik.KotaniSurvey} D.\ Damanik, ``Lyapunov exponents and spectral analysis of ergodic Schr\"odinger operators: a survey of Kotani Theory and its applications", \textit{Spectral Theory and Mathematical Physics: a Festschrift in Honor of Barry Simon's 60th Birthday -- Ergodic Schrodinger Operators, Singular Spectrum, Orthogonal Polynomials, and Inverse Spectral Theory}, Proceedings of Symposia in Pure Mathematics, 76, no. 2, (2007) 539--564  

\bibitem{DEFHV} D.\ Damanik, J.\ Erickson, J.\ Fillman, G.\ Hinkle, A.\ Vu, Quantum intermittency for sparse CMV matrices with an application to quantum walks on the half-line, \textit{J.\ Approx.\ Th.}\ \textbf{208} (2016), 59--84.

\bibitem{DFLY1} D.\ Damanik, J.\ Fillman, M.\ Lukic, W.\ Yessen, Uniform hyperbolicity for Szeg\H{o} cocycles and applications to random CMV matrices and the Ising model, \textit{Int.\ Math.\ Res.\ Not.}\ \textbf{2015} (2015), 7110--7129.

\bibitem{DFO} D.\ Damanik, J.\ Fillman, D.\ C.\ Ong, Spreading estimates for quantum walks on the integer lattice via power-law bounds on transfer matrices, \textit{J.\ Math.\ Pures Appl.}\ \textbf{105} (2016), 293--341.

\bibitem{DFV} D.\ Damanik, J.\ Fillman, R.\ Vance, Dynamics of unitary operators, \textit{J.\ Fractal Geom.}\ \textbf{1} (2014), 391--425.

\bibitem{DMY2} D.\ Damanik, P.\ Munger, W.\ Yessen, Orthogonal polynomials on the unit circle with Fibonacci Verblunsky coefficients, II. Applications, \textit{J.\ Stat.\ Phys.}\ \textbf{153} (2013), 339--362.

\bibitem{EKOS} T.\ Endo, N.\ Konno, H.\ Obuse, E.\ Segawa, Sensitivity of quantum walks to boundary of
two-dimensional lattices: approaches from the CGMV method and topological phases, 	arXiv:1611.05794 [math-ph] (2016)

\bibitem{Enss1978CMP}V.\ Enss, Asymptotic completeness for quantum mechanical potential scattering. I. Short range potentials, \textit{Comm.\ Math.\ Phys.}\ \textbf{61} (1978),  285--291. 

\bibitem{Fillman-Ong} J.\ Fillman, D.\ C.\ Ong, Purely singular continuous spectrum for limit-periodic CMV operators with applications to quantum walks, arXiv:1610.06159 [math.SP] (2016).

\bibitem{FOZ} J.\ Fillman, D.\ C.\ Ong, Z.\ Zhang, Spectral characteristics of the unitary critical Almost-Mathieu Operator, \textit{Commun.\ Math.\ Phys.}\ (2016). doi:10.1007/s00220-016-2775-8.

\bibitem{GVWW} A.\ Gr\"unbaum, L.\ Vel\'azquez, A.\ Werner, R.F.\ Werner, Recurrence for discrete time unitary evolutions, \textit{Comm.\ Math.\ Phys.}\ \textbf{320} (2013), 543--569.

\bibitem{Joye-Merkli} A.\ Joye, M.\ Merkli Dynamical localization of quantum walks in random environments \textit{J.\ Stat.\ Phys.} \textbf{140} (2010) 1025--1053.

\bibitem{Konno2010} N.\ Konno, Quantum walks and elliptic integrals, \textit{Math.\ Structures Comput.\ Sci.}\ \textbf{20} (2010), 1091--1098. 

\bibitem{Konno-Segawa2011} N.\ Konno, E.\ Segawa, Localization of discrete-time quantum walks on a half line via the CGMV method. \textit{Quantum Inf. Comput.} \textbf{11} (2011), 0485.

\bibitem{Konno-Segawa2014} N.\ Konno, E.\ Segawa, One-dimensional quantum walks via generating function and the CGMV method. \textit{Quantum Inf. Comput.} \textbf{14} (2014), 1165-1186.

\bibitem{Kotani1984} S.\ Kotani, Ljapunov indices determine absolutely continuous spectra of stationary random one-dimensional Schr\"odinger operators, in \textit{Stochastic Analysis} (\textit{Kakata/Kyoto, 1982}), North Holland, Amsterdam (1984) 225--247.

\bibitem{Kotani1985CMP} S.\ Kotani, Support theorems for random Schr\"odinger operators, \textit{Commun.\ Math.\ Phys.} \textbf{97} (1985), 443--452

\bibitem{Kotani1987} S.\ Kotani, One-dimensional random Schr\"odinger operators and Herglotz functions, \textit{Probabilistic methods in mathematical physics}
(Katata/Kyoto, 1985),  219--250, Academic Press, Boston, MA, 1987

%\bibitem{Kotani1989RMP} S.\ Kotani, Jacobi matrices with random potentials taking finitely many values, \textit{Rev.\ Math.\ Phys.}\ \textbf{1} (1989), 129--133.

\bibitem{Kotani1997} S.\ Kotani, Generalized Floquet theory for stationary Schr\"odinger operators in one dimension, \textit{Chaos Solitons Fractals} {\bf 8} (1997), 1817--1854.

%\bibitem{Romanelli2009PhysA} A.\ Romanelli, The Fibonacci quantum walk and its classical trace map, \textit{Phys.\ A} \textbf{388} (2009), 3985--3990.

\bibitem{Rudin} W.\ Rudin, \textit{Real and complex analysis, 3rd Edition}, McGraw-Hill, New York (1987).

\bibitem{Ruelle} D.\ Ruelle, A remark on bound states in potential-scattering theory, \textit{Nuovo Cimento A } \textbf{61} (1969), 655--662. 

\bibitem{Simon1983CMP} B.\ Simon, Kotani theory for one dimensional stochastic Jacobi matrices, \textit{Commun.\ Math.\ Phys.}\ \textbf{89} (1983) 227--234.

\bibitem{simanalogs} B.\ Simon, Analogs of the $m$-function in the theory of orthogonal
polynomials on the unit circle, \textit{Journal of Computational and Applied Mathematics} \textbf{171} (2004), 411--424.

\bibitem{S1} B.\ Simon, \textit{Orthogonal Polynomials on the Unit Circle. Part~1. Classical Theory}, Colloquium Publications, 54, American Mathematical Society, Providence (2005).

\bibitem{S2} B.\ Simon, \textit{Orthogonal Polynomials on the Unit Circle. Part~2. Spectral Theory}, Colloquium Publications, 54, American Mathematical Society, Providence (2005).

\bibitem{Teschl.MMQM} G.\ Teschl, \textit{Mathematical Methods in Quantum Mechanics with Applications to Schr\"odinger Operators, 2nd Edition}, Graduate Studies in Mathematics, 157,  American Mathematical Society, Providence (2015).

%\bibitem{Walters} P.\ Walters, \textit{An Introduction to Ergodic Theory}, Springer-Verlag, New York-Berlin, 1982.

\end{thebibliography}
\end{document}